\documentclass{amsart}

\RequirePackage[abspath,realmainfile]{currfile}

\usepackage{amsmath,amssymb,amsthm,amsfonts,amsbsy}
\usepackage{graphicx}
 
\usepackage{enumerate}
\usepackage[final]
{showkeys}

\usepackage{datetime} 
\usepackage{xcolor}

\usepackage{hyperref}

%%%%
%%%% The next few commands set up the theorem type environments.
%%%% Here they are set up to be numbered section.number, but this can
%%%% be changed.
%%%%

\providecommand{\url}[1]{#1}

\renewcommand{\le}{\leqslant}
\renewcommand{\ge}{\geqslant}

\newcommand{\intr}[2]{\overline{#1,#2}}

\renewcommand{\P}{\operatorname{\mathsf{P}}} 
\newcommand{\E}{\operatorname{\mathsf{E}}}

\newcommand{\ii}[1]{\operatorname{\mathsf{I}}\{#1\}}
\newcommand{\iii}{\operatorname{\mathsf{I}}}

\DeclareMathOperator{\vol}{\operatorname{vol}}

\newcommand{\bi}{\binom}

\newcommand{\R}{\mathbb{R}}                  
                 
\newcommand{\Z}{\mathbb{Z}}

\newcommand{\al}{\alpha}
\newcommand{\be}{\beta}
\newcommand{\ga}{\gamma}

\newcommand{\vp}{\varepsilon}

\newcommand{\si}{\sigma}

\newtheorem*{theorem*}{Theorem~\ref{th:}'}
\newtheorem*{thm*}{Theorem~\ref{th:der}'}

\newtheorem{theorem}{Theorem}
\newtheorem{corollary}[theorem]{Corollary}
\newtheorem*{corollary*}{Corollary~\ref{cor:A}'}
\newtheorem*{cor*}{Corollary~\ref{cor:Ader}'}

\theoremstyle{remark}
\newtheorem{remark}[theorem]{Remark}

\numberwithin{equation}{section}
\numberwithin{theorem}{section}

\linespread{1}

\begin{document}

\title[Order statistics on the spacings between order statistics]{Order statistics on the spacings between order statistics for the uniform distribution}

\author{Iosif Pinelis}
\address{Department of Mathematical Sciences\\Michigan Technological University\\Hough\-ton, Michigan 49931}
\email{ipinelis@mtu.edu}
%\urladdr{www.math.sc.edu/$\sim$howard} % Delete if not wanted.

\subjclass[2010]{%Primary 
62E15, 
62F03}

%62E15   	Exact distribution theory
%62F03   	Hypothesis testing

\keywords{Order statistics, spacings, uniform distribution, exponential distribution, tests of significance}

\date{\today}

\begin{abstract}
Closed-form expressions for the distributions of the order statistics on the spacings between order statistics for the uniform distribution are obtained. This generalizes a result by Fisher concerning tests of significance in the harmonic analysis of a series. 
\end{abstract}

\maketitle

%%
%% LaTeX can automatically make a table of contents.  This is done by
%% uncommenting the following:
%%

\tableofcontents

\section{%Statements of main results 
Summary 
and discussion}
\label{intro}

For any natural $n%\ge2
$, let $U_1,\dots,U_n$ be independent %identically distributed (iid) 
random variables (r.v.'s) each uniformly distributed on the interval $[0,1]$.  As usual, let $U_{n:1}\le\cdots\le U_{n:n}$ denote the corresponding order statistics. Consider  
\begin{equation}\label{eq:G_i}
	G_i:=U_{n:i}-U_{n:i-1} \quad \text{for}\quad i\in\intr1{n+1}, 
\end{equation}
where $U_{n:0}:=0$ and $U_{n+1:n+1}:=1$. 
Here and in what follows, $\intr\al\be:=\{k\in\Z\colon\al\le k\le\be\}$. 

One may refer to the $G_i$'s as the gaps or, as it is usually done in the literature, the spacings between the consecutive order statistics. See e.g.\ the paper by Pyke \cite{pyke65}, containing a review of known results for the spacings for the underlying uniform distribution and other distributions as well; see also \cite{david-nagaraja} for later updates. 

Let now 
$%\begin{equation*}
	G_{n+1:1}\le\cdots\le G_{n+1:n+1}
$ %\end{equation*}
denote the ordered gaps $G_1,\dots,G_{n+1}$, so that the ransom sets $\{G_{n+1:1},\dots, G_{n+1:n+1}\}$ and $\{G_1,\dots,G_{n+1}\}$ are the same. Let also $G_{n+1:0}:=0$ and $G_{n+1:n+2}:=1$, so that 
\begin{equation}\label{eq:G_n+1:i}
	0=G_{n+1:0}\le G_{n+1:1}\le\cdots\le G_{n+1:n+1}\le G_{n+1:n+2}=1. 
\end{equation}

The main result of this note describes the cumulative distribution function (cdf) of each of the ordered gaps $G_{n+1:1},\dots, G_{n+1:n+1}$: 

%\newpage

\begin{theorem}\label{th:}
For all $k\in\intr1{n+1}$ and $x\in[0,1]$ 
\begin{equation}\label{eq:}
	\P(G_{n+1:k}>x)=(-1)^{k+1}(n+1)\bi n{k-1}
	\sum_{r=0}^{k-1}\frac{(-1)^r}{n-r+1}\bi{k-1}r\big(1-(n-r+1)x\big)_+^n.  
\end{equation}
%\begin{equation*}
%	\P(G_{n+1:k}\le x)=1+(-1)^{k+1}(n+1)\bi n{k-1}
%	\sum_{s=-1}^{k-2}\frac{(-1)^s}{n-s}\bi{k-1}{s+1}\big(1-(n-s)x\big)_+^n, 
%\end{equation*}
\end{theorem}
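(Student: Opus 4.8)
The plan is to reduce \eqref{eq:} to a classical inclusion--exclusion identity for exchangeable events, after which the required probabilities become simple binomial moments that can be read off from the standard distribution theory of uniform spacings.

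First I would record the joint survival function of the spacings. It is classical that $(G_1,\dots,G_{n+1})$ is uniformly distributed on the simplex $\{(g_1,\dots,g_{n+1})\colon g_i\ge0,\ g_1+\cdots+g_{n+1}=1\}$; equivalently $(G_1,\dots,G_n)$ has constant density $n!$ on its support, equivalently $(G_1,\dots,G_{n+1})$ is the normalization of a vector of $n+1$ i.i.d.\ standard exponential r.v.'s. Computing the volume of the correspondingly shifted simplex gives, for $x_1,\dots,x_{n+1}\ge0$,
\[
\P(G_1>x_1,\dots,G_{n+1}>x_{n+1})=\Bigl(1-\sum_{i=1}^{n+1}x_i\Bigr)_+^{\,n}.
\]
Since the $G_i$ are exchangeable, this yields $\P\bigl(\bigcap_{i\in I}\{G_i>x\}\bigr)=(1-jx)_+^n$ for every $I\subseteq\intr1{n+1}$ with $|I|=j$, so the $j$-th binomial moment of the family of events $\bigl(\{G_i>x\}\bigr)_{i=1}^{n+1}$ equals $S_j=\bi{n+1}j(1-jx)_+^n$ for $j=1,\dots,n+1$.

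Next I would use the combinatorial identity of events: if exactly $m$ of the $G_i$ exceed $x$, these are precisely the $m$ largest spacings $G_{n+1:n+2-m},\dots,G_{n+1:n+1}$, so (up to a null event) $\{G_{n+1:k}>x\}$ is the event that at least $m:=n-k+2$ of the events $\{G_i>x\}$ occur, with $m\in\intr1{n+1}$. Applying the classical Bonferroni formula for the probability that at least $m$ of $N$ events occur, namely $\sum_{j=m}^{N}(-1)^{j-m}\bi{j-1}{m-1}S_j$, and substituting $S_j=\bi{n+1}j(1-jx)_+^n$ and $m=n-k+2$, gives
\[
\P(G_{n+1:k}>x)=\sum_{j=n-k+2}^{n+1}(-1)^{\,j-n+k}\,\bi{j-1}{n-k+1}\bi{n+1}j\,(1-jx)_+^n .
\]
Finally, putting $r:=n+1-j$ (so $r$ ranges over $\intr0{k-1}$ and $j=n-r+1$), the sign becomes $(-1)^{k+1}(-1)^r$, and it remains only to verify the elementary factorial identity
\[
\bi{n-r}{n-k+1}\bi{n+1}{n-r+1}=\frac{n+1}{n-r+1}\,\bi n{k-1}\bi{k-1}r ,
\]
which rewrites the previous display as exactly \eqref{eq:}.

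I expect the only genuine work to lie in the last two steps: choosing the correct version of inclusion--exclusion --- the ``at least $m$'' form with weights $\bi{j-1}{m-1}$, as opposed to the ``exactly $m$'' form with weights $\bi jm$ --- and carrying out the re-indexing so that the answer matches \eqref{eq:} term by term, prefactor $(-1)^{k+1}(n+1)\bi n{k-1}$ included. The distributional inputs (the simplex/Dirichlet law of the spacings and the value of $S_j$) are entirely standard, so no difficulty is anticipated there.
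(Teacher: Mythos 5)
Your proof is correct, but it follows a genuinely different route from the paper. The paper first establishes Theorem~\ref{th:2} (the ``exactly $m$ exceedances'' probability) by a direct volume computation: it applies the inclusion--exclusion formula \eqref{eq:mu(Pi)} to parallelepipeds intersected with the half-space $\{\sum z_i\le y\}$, splits the target event into the two pieces $Q_{n,k}$ and $R_{n,k}$ according to whether the exceptional gap $G_{n+1}$ is large or small, and then obtains Theorem~\ref{th:} by telescoping the increments $P_{n,j}$ and verifying the identity \eqref{eq:V-V}. You instead start from the joint survival function $\P(G_1>x_1,\dots,G_{n+1}>x_{n+1})=(1-\sum_i x_i)_+^n$ (an immediate consequence of the uniform law on the simplex, i.e.\ of \eqref{eq:g}), read off the binomial moments $S_j=\binom{n+1}{j}(1-jx)_+^n$, and invoke the exact Waring--Bonferroni identity for ``at least $m$ of $N$ events'' with $m=n-k+2$; the re-indexing $r=n+1-j$ and the factorial identity you state (which does check out) then give \eqref{eq:} directly. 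Your route buys a shorter path to Theorem~\ref{th:} -- it treats all $n+1$ gaps symmetrically and so avoids both the $Q/R$ case split and the telescoping verification -- and it would also yield Theorem~\ref{th:2} at no extra cost by using the ``exactly $m$'' form $\sum_{j\ge m}(-1)^{j-m}\binom{j}{m}S_j$ of the same identity. What the paper's approach buys in exchange is a self-contained derivation of the more refined quantity $p_{n,j}(x,y)$ with a free upper bound $y$ on the total, together with the general volume formula \eqref{eq:vol}, which are of some independent interest. The only points you should make explicit in a final write-up are the exact (not merely inequality) status of the Bonferroni formula you use, and the trivial boundary cases $x=0,1$, both of which are harmless.
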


Everywhere here, $u_+:=0\vee u=\max(0,u)$. 

The proof of Theorem~\ref{th:} is based on 

\begin{theorem}\label{th:2}
Take any $k\in\intr0{n+1}$ and $x\in(0,1)$. Then  
\begin{equation}\label{eq:2}
	\P(G_{n+1:k}\le x<G_{n+1:k+1})=(-1)^k\bi{n+1}k
	\sum_{r=0}^k(-1)^r\bi kr\big(1-(n-r+1)x\big)_+^n.  
\end{equation}
\end{theorem}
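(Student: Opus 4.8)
The plan is to turn \eqref{eq:2} into a counting identity. For $x\in(0,1)$ put $M=M(x):=\#\{i\in\intr1{n+1}\colon G_i>x\}$, the number of spacings exceeding $x$. Since $G_{n+1:1}\le\cdots\le G_{n+1:n+1}$ is the nondecreasing rearrangement of $G_1,\dots,G_{n+1}$, one checks directly from the conventions $G_{n+1:0}=0$, $G_{n+1:n+2}=1$ that the event $\{G_{n+1:k}\le x<G_{n+1:k+1}\}$ coincides \emph{pointwise} (no exceptional null set, and the endpoints $k=0,\,k=n+1$ included) with $\{M(x)=n+1-k\}$: if exactly $k$ of the $G_i$ are $\le x$ then the $k$ smallest order statistics are precisely those, so $G_{n+1:k}\le x<G_{n+1:k+1}$, and conversely. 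Thus it suffices to compute $\P\big(M(x)=n+1-k\big)$.

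Next I would invoke the two classical facts about uniform spacings (see \cite{pyke65,david-nagaraja}): (a) $(G_1,\dots,G_{n+1})$ is uniformly distributed on the simplex $\{g\in[0,\infty)^{n+1}\colon\sum_i g_i=1\}$, so in particular the $G_i$ are exchangeable; and (b) for any $j$ distinct indices $i_1,\dots,i_j$ and any $x\ge0$,
\begin{equation*}
\P(G_{i_1}>x,\dots,G_{i_j}>x)=(1-jx)_+^n ,
\end{equation*}
which follows from (a) because, when $jx\le1$, the slice $\{g_{i_1}>x,\dots,g_{i_j}>x\}$ of the simplex is a translate of $(1-jx)$ times the whole simplex, hence has $n$-dimensional volume $(1-jx)^n$ times that of the whole simplex (and the intersection is empty when $jx>1$).

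Then by exchangeability $\P(M=m)=\bi{n+1}m\,q_m$, where $q_m:=\P\big(G_1>x,\dots,G_m>x,\ G_{m+1}\le x,\dots,G_{n+1}\le x\big)$. Writing $\prod_{i=m+1}^{n+1}\ii{G_i\le x}=\prod_{i=m+1}^{n+1}\big(1-\ii{G_i>x}\big)$, expanding the product, and taking expectations termwise using (b) gives
\begin{equation*}
q_m=\sum_{a=0}^{n+1-m}(-1)^a\bi{n+1-m}a\big(1-(m+a)x\big)_+^n .
\end{equation*}
Finally, set $m=n+1-k$ and substitute $r=k-a$: then $n+1-m=k$, $m+a=n-r+1$, $(-1)^a=(-1)^{k-r}=(-1)^k(-1)^r$, $\bi{n+1-m}a=\bi k{k-r}=\bi kr$, and $\bi{n+1}m=\bi{n+1}{n+1-k}=\bi{n+1}k$, so the displayed formula becomes exactly \eqref{eq:2}. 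The range $0\le r\le k$ is precisely what the substitution produces, and summands with $(n-r+1)x>1$ drop out automatically through $(\cdot)_+$.

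The genuine probabilistic content is confined to fact (b), a one-line volume computation; everything after that is bookkeeping, and the only place needing care is the sign/index tracking in the last substitution (plus, if a self-contained write-up is desired, a sentence verifying (a)). I do not anticipate any analytic obstacle. An equivalent route is to read off the binomial moments $S_j=\bi{n+1}j(1-jx)_+^n$ and apply the standard "exactly $m$ of $n+1$ events" formula $\P(M=m)=\sum_{j\ge m}(-1)^{j-m}\bi jm S_j$, then reindex by $r=n+1-j$ and use $\bi{n+1-r}{k-r}\bi{n+1}r=\bi{n+1}k\bi kr$; I would present whichever is shorter to typeset, most likely the direct expansion above since it lands on \eqref{eq:2} without any auxiliary binomial identity.
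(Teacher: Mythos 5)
Your proof is correct, and it takes a genuinely different route from the paper's. You keep all $n+1$ spacings on an equal footing, identify the event $\{G_{n+1:k}\le x<G_{n+1:k+1}\}$ with $\{$exactly $n+1-k$ spacings exceed $x\}$, and reduce everything to the single classical fact $\P(G_{i_1}>x,\dots,G_{i_j}>x)=(1-jx)_+^n$ followed by textbook inclusion--exclusion for ``exactly $m$ events occur.'' The paper instead works with the joint density of only the first $n$ spacings, first establishes a general signed-vertex-sum formula for the volume of a box intersected with a half-space (its \eqref{eq:vol}), uses it to compute the auxiliary quantity $p_{n,j}(x,y)=\P\big(G_i\le x\ \forall i\le j,\ G_i>x\ \forall i>j,\ \sum_1^n G_i<y\big)$, and then must split the target event into two pieces according to whether $G_{n+1}>x$ (i.e.\ whether $\sum_1^n G_i<1-x$ or lies in $[1-x,1)$), recombining them with Pascal's identity. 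Your argument avoids both the box-vertex expansion and the case split, so it is shorter and more transparent; what the paper's route buys is the standalone volume formula \eqref{eq:vol} (of some independent interest) and the intermediate object $p_{n,j}(x,y)$, neither of which is actually needed for the corollaries. The only points that deserve an explicit sentence in a final write-up are the ones you already flagged: the pointwise identification of the two events at the endpoints $k=0$ and $k=n+1$, and a one-line verification of the tail fact (b), which holds for all $j\in\intr1{n+1}$ including $j=n+1$.
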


In particular, choosing $k=n+1$ in \eqref{eq:} or \eqref{eq:2}, we immediately get 
%The special case of \eqref{eq:} with $k=n+1$ is 
\begin{equation}\label{eq:k=n+1}
	\P(G_{n+1:n+1}>x)=
	\sum_{s=1}^{n+1}(-1)^{s-1}\bi{n+1}s\big(1-sx\big)_+^n.  
\end{equation}

\begin{remark}\label{rem:exp}
It %is nowadays 
has now been long  
a textbook fact (see e.g.\ \cite[Exercise~20, page 103]{arnold-etal92}) that the joint distribution of the gaps $G_1,\dots,G_{n+1}$ is the same as that of $R_1,\dots,R_{n+1}$, where 
\begin{equation}\label{eq:R_i}
R_i:=\frac{X_i}{X_1+\dots+X_{n+1}}	
\end{equation}
and the $X_i$'s are independent (say standard) exponential random variables. 
Moran \cite[page~93]{moran47} ascribes mentioning of this fact to Fisher \cite{fisher29}, and a proof of it -- without a specific reference -- to Clifford. %\qed
\end{remark}

In fact, Fisher \cite{fisher29} used geometric arguments to obtain the following formula for the distribution of $R_{n+1:n+1}=\max_{1\le i\le n+1}R_i$: 
%\begin{equation*}
%	R_{n+1:n+1}=\max_{0\le i\le n}R_i: 
%\end{equation*}
\begin{equation}\label{eq:fisher}
	\P(R_{n+1:n+1}>x)=\sum_{j=1}^{n+1}(-1)^{j-1}\binom{n+1}j(1-jx)_+^n. 
\end{equation}
In view of Remark~\ref{rem:exp}, \eqref{eq:k=n+1} is equivalent to \eqref{eq:fisher}. 

Accordingly, one may replace $G_{n+1:k}$ and $G_{n+1:k+1}$ in \eqref{eq:} and \eqref{eq:2} by $R_{n+1:k}$ and $R_{n+1:k+1}$, and thus obtain generalizations of \eqref{eq:fisher}. 

In Fisher's setting, r.v.'s of the form $cX_i$ with an unknown real parameter $c>0$ were certain test statistics in independent tests of significance in the harmonic analysis of a series; these tests are labeled by $1,\dots,n+1$ in this note and by $1,\dots,n$ in \cite{fisher29}. 
To remove the unknown parameter $c$, the test statistics $cX_i$ were then normalized in \cite{fisher29} by the observable quantity $cX_1+\dots+cX_{n+1}$, yielding the ratios $R_i$, as in \eqref{eq:R_i}, with a known joint distribution. 
Thus, $\P(R_{n+1:n+1}>x)$ is the probability that at least one of the $n+1$ normalized test statistics $R_i$ will exceed the critical value $x$. 

Accordingly, the probability $\P(G_{n+1:k}>x)$ in \eqref{eq:}, which equals $\P(R_{n+1:k}>x)$, is the probability that at least $\ell:=(n+1)-(k-1)=n-k+2$ of the $n+1$ normalized test statistics $R_i$ will exceed the critical value $x$; note that here $\ell$ can take any value in $\intr1{n+1}$. 
Similarly, the probability $\P(G_{n+1:k}\le x<G_{n+1:k+1})$ in \eqref{eq:2} is the probability that exactly $m:=n+1-k$ of the $n+1$ normalized test statistics $R_i$ will exceed the critical value $x$; note that here $m$ can take any value in $\intr0{n+1}$.  

Thus, Theorems~\ref{th:} and \ref{th:2} above provide useful additional information concerning the independent tests considered by Fisher. 

The proofs of Theorems~\ref{th:} and \ref{th:2} are based on certain geometric, combinatorial, and %algebraic 
analytic considerations. 
The method of proof of \eqref{eq:fisher} in \cite{fisher29} does not seem to work for the more general results presented in the present note. 
%The proof of \eqref{eq:fisher} in \cite{fisher29} involves %geometric 
%ideas %although 
%apparently rather different from the ones used here, and those ideas do not seem to work for the more general results presented in this note. 
% That proof in \cite{fisher29} appears very informal and sketchy, and %I have been unable to understand 
%some parts of it seem unclear. In particular, it seems unclear how the polynomial $f$ in $t$ and the polynomial $\text{P}$ in $g$, introduced on page~57 in \cite{fisher29}, are related to each other. It is also unclear how the ultimate expression for the probability in question was obtained in \cite{fisher29}. For these reasons as well, it may be of use to now have a formal proof of Fisher's result and its generalizations given in the present note. 

The following result is also based on Theorem~\ref{th:2}. 
\begin{corollary}\label{cor:}
Take any $k\in\intr0{n+1}$. Then 
\begin{equation}\label{eq:EG}
	\E G_{n+1:k}=\frac{H_{n+1}-H_{n+1-k}}{n+1}, 
\end{equation}
where 
\begin{equation*}
	H_j:=1+\frac12+\dots+\frac1j 
\end{equation*}
is the $j$th harmonic number, with $H_0:=0$. 
\end{corollary}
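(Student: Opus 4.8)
The plan is to deduce Corollary~\ref{cor:} directly from Theorem~\ref{th:2} by writing the expectation of the nonnegative random variable $G_{n+1:k}$ as $\E G_{n+1:k}=\int_0^1\P(G_{n+1:k}>x)\,\dd x$, and then to use the identity $\P(G_{n+1:k}>x)=\sum_{j\ge k}\P(G_{n+1:j}\le x<G_{n+1:j+1})$, together with the closed form \eqref{eq:2}. Since the paper labels the main target \eqref{eq:} rather than \eqref{eq:2} as the cdf of a single ordered gap, I would first observe that \eqref{eq:} itself already gives
\begin{equation*}
	\E G_{n+1:k}=\int_0^1\P(G_{n+1:k}>x)\,\dd x
	=(-1)^{k+1}(n+1)\bi n{k-1}\sum_{r=0}^{k-1}\frac{(-1)^r}{n-r+1}\bi{k-1}r\int_0^1\big(1-(n-r+1)x\big)_+^n\,\dd x,
\end{equation*}
so that the whole problem reduces to a single elementary integral and a binomial-sum identity.

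Next I would compute $\int_0^1(1-(n-r+1)x)_+^n\,\dd x$. The integrand vanishes for $x>1/(n-r+1)$, and substituting $t=(n-r+1)x$ gives $\frac1{n-r+1}\int_0^1(1-t)^n\,\dd t=\frac1{(n-r+1)(n+1)}$. Plugging this in, the factor $n+1$ cancels and we are left with
\begin{equation*}
	\E G_{n+1:k}=(-1)^{k+1}\bi n{k-1}\sum_{r=0}^{k-1}\frac{(-1)^r}{(n-r+1)^2}\bi{k-1}r.
\end{equation*}
So the remaining task is purely a combinatorial identity: to show that this alternating sum equals $\dfrac{(-1)^{k+1}}{\bi n{k-1}}\cdot\dfrac{H_{n+1}-H_{n+1-k}}{n+1}$, i.e. that
\begin{equation*}
	\bi n{k-1}\sum_{r=0}^{k-1}\frac{(-1)^r}{(n-r+1)^2}\bi{k-1}r
	=(-1)^{k+1}\,\frac{H_{n+1}-H_{n+1-k}}{n+1}.
\end{equation*}

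For this identity the standard route is to write $\frac1{(n-r+1)^2}=\int_0^1 s^{n-r}\,(-\log s)\,\dd s$ (or equivalently $\int_0^1\!\!\int_0^1 (st)^{n-r}\,\dd s\,\dd t$), interchange sum and integral, and use $\sum_{r=0}^{k-1}(-1)^r\bi{k-1}r s^{-r}=(1-1/s)^{k-1}$, turning the sum into $\int_0^1 s^{n-k+2}(1-1/s)^{k-1}(-\log s)\,\dd s=\int_0^1 s^{n-k+1}(s-1)^{k-1}(-\log s)\,\dd s$ up to sign. After the substitution $s\mapsto 1-s$ this is a Beta-type integral whose value involves exactly a difference of consecutive harmonic numbers; tracking the Pochhammer/binomial prefactor then yields $(H_{n+1}-H_{n+1-k})/(n+1)$. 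An alternative, perhaps cleaner, derivation is the telescoping one suggested by the structure of the answer: using the well-known $\E G_{n+1:k}-\E G_{n+1:k-1}$ or the first-differences of \eqref{eq:2} in $k$ and induction on $k$, with the base case $k=0$ giving $\E G_{n+1:0}=0=(H_{n+1}-H_{n+1})/(n+1)$.

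The main obstacle is the evaluation of the alternating binomial sum $\sum_{r=0}^{k-1}(-1)^r\bi{k-1}r/(n-r+1)^2$ and recognizing the harmonic-number difference; the probabilistic reduction and the single integral are routine, but care is needed with the signs and with the identity $\sum_{r}(-1)^r\bi kr/(a+r)=1/\big(a\bi{a+k}k\big)$ and its derivative in $a$, which is what produces the squared denominator and hence the harmonic numbers. I would also double-check the edge cases $k=0$ (both sides $0$) and $k=n+1$ (right side $H_{n+1}/(n+1)$, matching the known mean of the maximal spacing) as sanity checks on the constants.
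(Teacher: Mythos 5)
Your reduction is correct and the route is workable, but it is genuinely different from (and somewhat harder than) the paper's. You integrate the tail from Theorem~\ref{th:} directly, $\E G_{n+1:k}=\int_0^1\P(G_{n+1:k}>x)\,\dd x$, which after the elementary integral $\int_0^1(1-(n-r+1)x)_+^n\,\dd x=\frac1{(n-r+1)(n+1)}$ leaves you with an alternating sum with \emph{squared} denominators $\sum_{r=0}^{k-1}(-1)^r\bi{k-1}r/(n-r+1)^2$; evaluating that forces the $-\log s$ (equivalently, Beta-derivative/digamma) machinery, and that is where the harmonic numbers appear. I checked that this does close: $\frac1{(n-r+1)^2}=\int_0^1 s^{n-r}(-\log s)\,\dd s$ turns the sum into $(-1)^{k-1}\int_0^1 s^{n-k+1}(1-s)^{k-1}(-\log s)\,\dd s=(-1)^{k-1}\frac{(n-k+1)!\,(k-1)!}{(n+1)!}\,(H_{n+1}-H_{n+1-k})$ (no substitution $s\mapsto1-s$ is needed --- this is just $-\partial_aB(a,b)$ at $a=n-k+2$, $b=k$), and the prefactor $\bi n{k-1}$ collapses everything to $(H_{n+1}-H_{n+1-k})/(n+1)$. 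The paper instead telescopes: it computes $\E(G_{n+1:j+1}-G_{n+1:j})=\int_0^1\P(G_{n+1:j}\le x<G_{n+1:j+1})\,\dd x$ from Theorem~\ref{th:2}, which leads only to the first-power sum $\sum_{r=0}^{j}(-1)^r\bi jr\frac1{n-r+1}=\int_0^1u^n(1-1/u)^j\,\dd u=(-1)^j\frac{(n-j)!\,j!}{(n+1)!}$ --- a plain Beta integral with no logarithm --- giving the increment $\frac1{(n+1)(n+1-j)}$ and then the harmonic sum by summation over $j$. So the telescoping alternative you mention at the end is exactly the paper's proof, and it buys a strictly easier identity; your direct route buys independence from the telescoping structure at the cost of a digamma-level evaluation. (The paper also gives a second, independent proof via the exponential representation of Remark~\ref{rem:exp}, which neither of your routes touches.) The one thing you should still write out to make the argument complete is the evaluation of the log-Beta integral, since as stated it is only asserted.
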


In particular, since $\frac1r\sim\int_r^{r+1}\frac{dx}{x}$ as $r\to\infty$, it follows from \eqref{eq:EG} that 
\begin{equation*}
	\E G_{n+1:k}\sim\frac1n\,\ln\frac n{n-k}\quad\text{if}\quad n-k\to\infty; 
\end{equation*}
as usual, we write $a\sim b$ for $a/b\to1$. 
Further, if $k=o(n)$, then  
\begin{equation*}
	\E G_{n+1:k}\sim\frac k{n^2},  
\end{equation*}
so that $\E G_{n+1:k}$ is asymptotically linear in $k$. 
Further, taking $k=1$, we also see that %, for large $n$, 
the expectation of the smallest among the gaps $G_1,\dots,G_{n+1}$ is $\E G_{n+1:1}=\frac1{(n+1)^2}$, which is $n+1$ times as small as the average $\frac1{n+1}$ of these $n+1$ gaps. 

On the other hand, the expectation of the largest among the gaps $G_1,\dots,G_{n+1}$ is 
\begin{equation*}
	\E G_{n+1:n+1}=\frac{H_{n+1}}{n+1}\sim\frac{\ln n}n
\end{equation*}
as $n\to\infty$, so that the largest gap is about $\ln n$ times as large on the average as the average of the gaps. 

\section{Proofs}\label{proofs}

\begin{proof}[Proof of Theorem~\ref{th:2}]
We begin with the following simple observation. Let $\mu$ be a measure defined on the Borel $\si$-algebra over $\R^n$ with a finite joint tail function $T_\mu$ defined by the formula 
\begin{equation*}
	T_\mu(x):=\mu\big(Q(x)\big), 
\end{equation*}
where 
\begin{equation*}
	Q(x):=\prod_{i=1}^n(x_i,\infty)
\end{equation*}
is the ``tail'' orthant with the vertex $x=(x_1,\dots,x_n)\in\R^n$. 

For any $a=(a_1,\dots,a_n)$ and $b=(b_1,\dots,b_n)$ in $\R^n$ such that $a_i\le b_i$ for all $i\in\intr1n$, consider the parallelepiped  
\begin{equation*}
	\Pi_{a,b}:=\prod_{i=1}^n(a_i,b_i]. 
\end{equation*}
Also, let 
$$h=(h_1,\dots,h_n):=b-a$$ 
and, for each $\vp=(\vp_1,\dots,\vp_n)\in\{0,1\}^n$, let $a+\vp h:=(a_1+\vp_1 h_1,\dots,a_n+\vp_n h_n)$ and $|\vp|:=\vp_1+\dots+\vp_n$; note that the $i$th coordinate $a_i+\vp_i h_i$ of the vector $a+\vp h$ equals $a_i$ or $b_i$ depending on whether $\vp_i$ equals $0$ or $1$. As usual, let $\iii_A$ denote the indicator function of a set $A$. 
Then 
\begin{align*}
	\mu(\Pi_{a,b})
	&=\int_{\R^n}d\mu\,\prod_{i=1}^n\big(\iii_{(a_i,\infty)}-\iii_{(b_i,\infty)}\big) \\ 	&=\int_{\R^n}d\mu\,	%\sum_{J\subseteq\intr1n}(-1)^{|J|}\prod_{i=1}^n\big(\iii_{(a_i,\infty)}-\iii_{(a_i,\infty)}\big)
\sum_{\vp\in\{0,1\}^n}(-1)^{|\vp|}\,\iii_{Q(a+\vp h)},
\end{align*}
whence
\begin{equation}\label{eq:mu(Pi)}
	\mu(\Pi_{a,b})=
\sum_{\vp\in\{0,1\}^n}(-1)^{|\vp|}\,T_\mu(a+\vp h). 
\end{equation}

In particular, for $y\in\R$, $\al\in[0,\infty)$, and $\ga=(\ga_1,\dots,\ga_n)\in(0,\infty)^n$, consider now the measure $\mu_{y,\al,\ga}$ that has the density with respect to the Lebesgue measure on $\R^n$ given by the formula 
\begin{equation*}
	\frac{d\mu_{y,\al,\ga}}{dx}=(y-\ga\cdot x)_+^\al
\end{equation*}
for $x=(x_1,\dots,x_n)\in\R^n$, where $\ga\cdot x:=\sum_1^n\ga_i x_i$ and (concerning the case $\al=0$) $0^0:=0$. Then, using induction on $n$ or, more specifically, iterated integration, it is easy to see that 
\begin{equation*}
	T_{\mu_{y,\al,\ga}}(x)= \frac{(y-\ga\cdot x)_+^{\al+n}}{\prod_1^n\big((\al+i)\ga_i\big)}
\end{equation*}
for $x=(x_1,\dots,x_n)\in\R^n$. 

Choosing now $\al=0$, we get 
\begin{equation}\label{eq:vol}
	\vol_n(\Pi_{a,b}\cap H_{\ga,y})
	=\frac1{n!\prod_1^n\ga_i}\,\sum_{\vp\in\{0,1\}^n}(-1)^{|\vp|}\,
	\big(y-\ga\cdot(a+\vp h)\big)_+^n,  
\end{equation}
where $\vol_n$ denotes the volume in $\R^n$ (that is, the Lebesgue measure on $\R^n$) and 
$%\begin{equation*}
H_{\ga,y}:=\big\{x\in\R^n\colon \ga\cdot x\le y\big\}.  	
$ %\end{equation*}
For $a=(0,\dots,0)$ and $b=(1,\dots,1)$, formula \eqref{eq:vol} was given in \cite{barrow-smith} and \cite{lawrence}. (The condition that $\ga_i>0$ for all $i$ was missing in \cite{barrow-smith}.) 

Next, note that 
the joint probability density function (pdf), say $f$, of the order statistics $U_{n:1},\dots,U_{n:n}$ is given by the formula %??? source
\begin{equation}\label{eq:f}
	f(y_1,\dots,y_n)=n!\,\ii{0<y_1<\dots<y_n<1}
\end{equation}
for $(y_1,\dots,y_n)\in\R^n$; see e.g.\ \cite[page~12]{david-nagaraja}. 
Since, in view of \eqref{eq:G_i}, the r.v.'s $G_1,\dots,G_n$ are obtained from $U_{n:1},\dots,U_{n:n}$ by a linear transformation with determinant $1$, we see that 
the joint pdf, say $g$, of the gaps $G_1,\dots,G_n$ is given by the formula %??? source
\begin{equation}\label{eq:g}
	g(z_1,\dots,z_n)=n!\,\ii{z_1>0,\dots,z_n>0,z_1+\dots+z_n<1}
\end{equation}
for $(z_1,\dots,z_n)\in\R^n$. 

Take now any $j\in\intr0{n%+1
}$, $x\in(0,1)$, and $y\in(0,1]$, and let
\begin{equation}\label{eq:p_n}
	p_{n,j}(x,y):=\P\Big(G_i\le x\ \forall i\in\intr1j,\ 
	G_i>x\ \forall i\in\intr{j+1}n,\ \sum_1^n G_i<y\Big).
\end{equation}
Then, by \eqref{eq:g}, 
\begin{equation*}
	p_{n,j}(x,y)=n!\vol_n(%\La^{j,x,y}
	\Pi_{a^{j,x},b^{j,x}}\cap H_{\ga_1,y}),
\end{equation*}
where 
%\begin{equation*}
%	\La^{j,x,y}:=\Pi_{a^{j,x},b^{j,x}}\cap H_{\ga_1,y}, 
%\end{equation*}
$a^{j,x}_i:=0$ and $b^{j,x}_i:=x$ for $i\in\intr1j$, $a^{j,x}_i:=x$ and $b^{j,x}_i:=1$ for $i\in\intr{j+1}n$, and $\ga_1:=(1,\dots,1)\in\R^n$. 

So, letting $|\vp|_*:=\sum_1^j\vp_i$ and $|\vp|_{**}:=\sum_{j+1}^n\vp_i$ for $\vp\in\{0,1\}^n$ and using \eqref{eq:vol}, we have 
\begin{align*}
	p_{n,j}(x,y)&=\sum_{\vp\in\{0,1\}^n}(-1)^{|\vp|}\,
	\big(y-(n-j)x-|\vp|_* x-|\vp|_{**}(1-x)\big)_+^n \\ 
	&=\sum_{\al=0}^j\sum_{\be=0}^{n-j}(-1)^{\al+\be}\,\bi j\al \bi{n-j}\be 
	\big(y-(n-j)x-\al x-\be(1-x)\big)_+^n. 
%	 \\ 
%	&=\sum_{\al=0}^j(-1)^\al\,\bi j\al \big(y-(n-j+\al)x\big)_+^n.  
\end{align*}
Note also that $(n-j-\be+\al)x\ge0$ for $\al\in\intr0j$, $\be\in\intr0{n-j}$, and $x\in(0,1)$, so that  
$\big(y-(n-j)x-\al x-\be(1-x)\big)_+=\big(y-\be-(n-j-\be+\al)x\big)_+=0$ for $y\in(0,1]$ and $\be\in\intr1{n-j}$. 
Therefore, the latter displayed expression for $p_{n,j}(x,y)$ greatly simplifies: 
\begin{align}
	p_{n,j}(x,y)&=\sum_{\al=0}^j(-1)^\al\,\bi j\al \big(y-(n-j+\al)x\big)_+^n \notag \\ 
	&=\sum_{r=0}^j (-1)^{j-r} \bi jr \big(y-(n-r)x\big)_+^n \notag \\ 
	&=\sum_{r=-\infty}^\infty (-1)^{j-r} \bi jr \big(y-(n-r)x\big)_+^n. \label{eq:p_n,k=} 
\end{align}
The latter equality holds because 
\begin{equation}\label{eq:bin}
\bi jr=0\quad\text{for}\quad j\in\intr0\infty\quad\text{and}\quad r\in\intr{-\infty}{-1}\,\cup\,\intr{j+1}\infty,
\end{equation}
with $\bi jr$ understood in the combinatorial sense, as the cardinality of the set $\bi{[j]}r$ of all subsets of cardinality $r$ of the set 
$$[j]:=\intr1j.$$  
%where $[n]:=\intr1n$ and $\bi{[n]}k$ denotes the set of all subsets of $[n]$ of cardinality $k$; 
%; 
for instance, for all $j\in\intr0\infty$ we have $\bi j{-1}=0$ because 
%any set has no subsets of cardinality $-1$, that is, 
$\bi{[j]}{-1}=\emptyset$. For another, analytic approach to generalized binomial coefficients $\bi jr$, which leads to the same results for $j\in\intr0\infty$ and $r\in\intr{-\infty}{-1}\,\cup\,\intr{j+1}\infty$, see e.g.\ \cite{fowler}.

After these preliminary observations, we are ready to consider the probability in \eqref{eq:2}: 
\begin{equation}\label{eq:P_n,k}
	P_{n,k}:=\P(G_{n+1:k}\le x<G_{n+1:k+1})=Q_{n,k}+R_{n,k},  
\end{equation}
where 
\begin{align*}
	Q_{n,k}&:=\sum_{J\in\bi{[n]}k}\P\Big(G_i\le x\ \forall i\in J,\ 
	G_i>x\ \forall i\in[n]\setminus J,\ \sum_1^n G_i<1-x\Big) \\ 
	&=\bi nk	p_{n,k}(x,1-x), \\ 
	R_{n,k}&:=\sum_{J\in\bi{[n]}{k-1}}\P\Big(G_i\le x\ \forall i\in J,\ 
	G_i>x\ \forall i\in[n]\setminus J,\ 1>\sum_1^n G_i\ge1-x\Big) \\ 
	&=\bi n{k-1}	\big(p_{n,k-1}(x,1)-p_{n,k-1}(x,1-x)\big);   
\end{align*}
%where $[n]:=\intr1n$ and $\bi{[n]}k$ denotes the set of all subsets of $[n]$ of cardinality $k$; 
here we used the definition of $p_{n,k}(x,y)$ in \eqref{eq:p_n} and the fact that, in view of \eqref{eq:g}, the r.v.'s $G_1,\dots,G_n$ are exchangeable. 

In Theorem~\ref{th:2}, $k$ may take any value in the set $\intr0{n+1}$, whereas the expression in \eqref{eq:p_n,k=} for $p_{n,j}(x,y)$ was established only for $j\in\intr0n$. However, $Q_{n,n+1}=0$ because $\bi n{n+1}=0$, and $R_{n,0}=0$ because $\bi n{-1}=0$, whereas $k-1\in\intr0n$ for $k\in\intr1{n+1}$. It follows that, for all $k\in\intr0{n+1}$, we can replace all entries of $p_{n,\cdot}(x,\cdot)$ in the above expressions for $Q_{n,k}$ and $R_{n,k}$ by the corresponding expressions according to \eqref{eq:p_n,k=}.  
Thus, letting now 
\begin{equation}\label{eq:a_r}
	a_r:=a_{n,r}(x):=(-1)^r\big(1-(n-r)x\big)_+^n,  
\end{equation}
we have 
\begin{align}
	(-1)^{k+1}P_{n,k}&=\bi nk\sum_{r=-\infty}^\infty \bi kr a_{r-1} \notag \\ 
	&+\bi n{k-1}\sum_{r=-\infty}^\infty \bi {k-1}r a_r \notag \\ 
	&+\bi n{k-1}\sum_{r=-\infty}^\infty \bi {k-1}r a_{r-1} \notag \\ 
	&=\sum_{r=-\infty}^\infty
	\left(\bi nk\bi kr+\bi n{k-1}\bi{k-1}{r-1}+\bi n{k-1}\bi{k-1}r\right)a_{r-1} \notag \\ 
	&=\sum_{r=-\infty}^\infty
	\left(\bi nk\bi kr+\bi n{k-1}\bi kr\right)a_{r-1} \notag \\ 
	&=\sum_{r=-\infty}^\infty \bi{n+1}k \bi kr a_{r-1}
	=\bi{n+1}k\sum_{r=-\infty}^\infty \bi kr a_{r-1}. \label{eq:P_n,k=}
\end{align}
Now \eqref{eq:2} immediately follows, in view of \eqref{eq:P_n,k} and \eqref{eq:a_r}. 
\end{proof}

\begin{proof}[Proof of Theorem~\ref{th:}]
For all $j\in\intr0{n+1}$ 
\begin{align}
	\P(G_{n+1:j+1}>x)-\P(G_{n+1:j}>x)
	&=\P(G_{n+1:j}\le x<G_{n+1:j+1}) \notag \\ 
	&=P_{n,j}=(-1)^{j+1}\bi{n+1}j\sum_{r=0}^j\bi jr a_{r-1}, \label{eq:P_n,j=}
\end{align}
the latter two equalities holding by virtue of \eqref{eq:P_n,k} and \eqref{eq:P_n,k=}. 
%, \eqref{eq:2}, and \eqref{eq:a_r}. 
Also, in view of \eqref{eq:G_n+1:i} and because $x\in(0,1)$, we have $\P(G_{n+1:0}>x)=0$. So, one can find $\P(G_{n+1:k}>x)$ by summation: 
\begin{equation*}
	\P(G_{n+1:k}>x)=\sum_{j=0}^{k-1}P_{n,j}, 
\end{equation*}
and this is how the expression of $\P(G_{n+1:k}>x)$ in \eqref{eq:} was actually found. 

However, once that expression has been obtained, it is sufficient -- and much easier -- to verify \eqref{eq:} by checking the identity 
\begin{equation}\label{eq:V-V}
	V_{n,j+1}-V_{n,j}\overset{\text{(?)}}=P_{n,j}
\end{equation}
for all $j\in\intr0n$, where 
\begin{equation}\label{eq:V_n,j}
	V_{n,j}:=(-1)^j(n+1)\bi n{j-1}
	\sum_{r=0}^{j-1}\frac{a_{r-1}}{n-r+1}\bi{j-1}r, 
\end{equation}
the right-hand side of \eqref{eq:} with $j$ in place of $k$, taking also \eqref{eq:a_r} into account;  
hence, 
\begin{equation}\label{eq:V_n,j+1}
	V_{n,j+1}:=(-1)^{j+1}(n+1)\bi nj
	\sum_{r=0}^j\frac{a_{r-1}}{n-r+1}\bi jr.  
\end{equation}

Indeed, it will immediately follow from \eqref{eq:P_n,j=} and \eqref{eq:V-V} that 
\begin{equation*}
	\P(G_{n+1:j+1}>x)-\P(G_{n+1:j}>x)=V_{n,j+1}-V_{n,j}
\end{equation*}
for $j\in\intr0n$. Since $\P(G_{n+1:0}>x)=0=V_{n,0}$, it will then follow by induction on $k$ or, equivalently, by telescoping summation, that 
$\P(G_{n+1:k}>x)=V_{n,k}$ for all $k\in\intr0{n+1}$, which will complete the proof of Theorem~\ref{th:}. 

Turning now back to identity \eqref{eq:V-V}, we see 
that each side of it equals $-a_{-1}$ when $j=0$. 

Next, it is convenient to replace $\sum_{r=0}^j$ in \eqref{eq:P_n,j=} and \eqref{eq:V_n,j+1}, as well as $\sum_{r=0}^{j-1}$ in \eqref{eq:V_n,j}, by $\sum_{r=0}^n$; in view of \eqref{eq:bin} and the condition $j\in\intr0n$, these replacements will not affect the values of the corresponding expressions for $P_{n,j}$, $V_{n,j+1}$, and $V_{n,j}$. 

Thus, it suffices to check that the coefficients of the $a_{r-1}$'s on both sides of \eqref{eq:V-V} are the same for all $j\in\intr1n$ and $r\in\intr0n$, which amounts to checking the identity 
\begin{equation*}
	\frac{n+1}{n-r+1}\left(\bi nj \bi jr+\bi n{j-1} \bi{j-1}r\right)
	\overset{\text{(?)}}=\bi{n+1}j \bi jr
\end{equation*}
for such $j$ and $r$, which in turn becomes immediately obvious on replacing $\bi nj$, $\bi n{j-1}$, and $\bi{j-1}r$ there by the corresponding equal expressions 
$\bi{n+1}j\frac{n+1-j}{n+1}$, $\bi{n+1}j\frac j{n+1}$, and $\bi jr\frac{j-r}j$. 
Theorem~\ref{th:} is now proved. 
%%%%% tttttyy
\end{proof}

\begin{proof}[Proof of Corollary~\ref{cor:}]
Take any $j\in\intr0n$. Then, in view of \eqref{eq:G_n+1:i} and Theorem~\ref{th:2}, 
\begin{align*}
\E(G_{n+1:j+1}-G_{n+1:j})&=\E\int_0^1 dx\,\ii{G_{n+1:j}\le x<G_{n+1:j+1}} \\ 
	&=\int_0^1 dx\,\P(G_{n+1:j}\le x<G_{n+1:j+1}) \\ 
	&=(-1)^j\bi{n+1}j
	\sum_{r=0}^j(-1)^r\bi jr
	\int_0^1 dx\,\big(1-(n-r+1)x\big)_+^n \\ 
	&=(-1)^j\bi{n+1}j \frac S{n+1},  
\end{align*}
where 
\begin{align*}
	S&:=\sum_{r=0}^j(-1)^r\bi jr\frac1{n-r+1} \\
	&=\sum_{r=0}^j(-1)^r\bi jr\int_0^1 du\,u^{n-r} \\
	&=\int_0^1 du\,u^n\sum_{r=0}^j (-u)^{-r}\bi jr \\
	&=\int_0^1 du\,u^n(1-1/u)^j
	=(-1)^j\frac{(n-j)!j!}{(n+1)!}. 
\end{align*}
So, 
\begin{equation*}
	\E G_{n+1:j+1}-\E G_{n+1:j}=\E(G_{n+1:j+1}-G_{n+1:j})=\frac1{(n+1)(n+1-j)}. 
\end{equation*}
Also, again in view of \eqref{eq:G_n+1:i}, $G_{n+1:0}=0$ and hence $\E G_{n+1:0}=0$. 
Thus, 
\begin{equation*}
	\E G_{n+1:k}=\sum_{j=0}^{k-1}(\E G_{n+1:j+1}-\E G_{n+1:j})
	=\sum_{j=0}^{k-1}\frac1{(n+1)(n+1-j)}=\frac{H_{n+1}-H_{n+1-k}}{n+1}, 
\end{equation*}
which completes the proof of Corollary~\ref{cor:}. 
\end{proof}

The following alternative proof of Corollary~\ref{cor:} is more direct, as it does not rely on Theorem~\ref{th:2}. Instead, it uses the more elementary Remark~\ref{rem:exp}. 

\begin{proof}[``Direct'' proof of Corollary~\ref{cor:}]
Let $X_1,\dots,X_{n+1}$ be as in Remark~\ref{rem:exp}, and then let $X_{n+1:1}\le\cdots\le X_{n+1:n+1}$ be the corresponding order statistics. Then for the r.v.'s $R_1,\dots,R_{n+1}$ defined by \eqref{eq:R_i} and the corresponding order statistics $R_{n+1:1}\le\cdots\le R_{n+1:n+1}$ we have 
\begin{equation}\label{eq:R_n+1,i}
R_{n+1:k}:=\frac{X_{n+1:k}}{X_{n+1:n+1}+\dots+X_{n+1:n+1}}. 	
\end{equation} 
The joint pdf, say $h$, of $X_{n+1:1},\dots,X_{n+1:n+1}$ is given by the formula 
\begin{equation}\label{eq:h}
	h(x_1,\dots,x_{n+1})=(n+1)!e^{-w_{n+1}} 
	\ii{0<x_1<\dots<x_{n+1}}
\end{equation}
for $(x_1,\dots,x_{n+1})\in\R^{n+1}$, 
where 
\begin{equation*}
	w_j:=x_1+\dots+x_j;  %, \quad I_k:=\ii{0<y_1<\dots<y_k}, 
\end{equation*}
see e.g.\ \cite[page~12]{david-nagaraja} again. 

We will also need the following very simple but crucial observation: 
for any real $u>0$
\begin{equation}\label{eq:1/u}
	\frac1u=\int_0^\infty dt\, e^{-tu}. 
\end{equation}
One can view this as a decomposition of the inconvenient function $u\mapsto\frac1u$ into the nice ``harmonics'' $u\mapsto e^{-tu}$. 

Now, introducing  
\begin{equation*}
		L_{n+1,j}:=\int_{S_{n+1}} dx_{1,n+1}\,x_j\, e^{-w_{n+1}},  
\end{equation*}
where 
\begin{equation*}
	S_j:=\{(x_1,\dots,x_j)\colon 0<x_1<\dots<x_j\} \quad \text{and}\quad 
	dx_{1,j}:=dx_1\cdots dx_j 
\end{equation*}
for natural $j$, and using Remark~\ref{rem:exp}, \eqref{eq:R_n+1,i}, \eqref{eq:h}, and \eqref{eq:1/u} (with $u=w_{n+1}$), we can write 
\begin{align}
	\E G_{n+1:k}&=\E R_{n+1:k} \notag \\ 
	&=(n+1)!\int_{S_{n+1}} dx_{1,n+1}\, e^{-w_{n+1}}\,\frac{x_k}{w_{n+1}} \notag \\ 
	&=(n+1)!\int_0^\infty dt\,\int_{S_{n+1}} dx_{1,n+1}\,e^{-w_{n+1}}\,x_k\, e^{-t w_{n+1}} \notag \\ 
		&=(n+1)!\int_0^\infty dt\,\int_{S_{n+1}} dx_{1,n+1}\,x_k\, e^{-(1+t)w_{n+1}} \notag \\ 
	&=(n+1)!\int_0^\infty 
	\frac{dt}{(1+t)^{n+2}}\,L_{n+1,k}  
	=n! L_{n+1,k}.  \label{eq:key}
\end{align}

It remains to evaluate $L_{n+1,k}$. Toward this end, for positive real $t_1,\dots,t_{n+1}$ consider 
\begin{align}
	M(t_1,\dots,t_{n+1})&:=\int_{S_{n+1}} dx_{1,n+1}\, e^{-t_1x_1-\dots-t_{n+1}x_{n+1}} \notag \\ 
&=\frac1{t_{n+1}}\,\int_{S_n} dx_{1,n}\, e^{-t_1x_1-\dots-t_{n-1}x_{n-1}-(t_{n+1}+t_n)x_n} \notag \\ 
\vdots \notag \\ 
&=\frac1{t_{n+1}(t_{n+1}+t_n)\dots(t_{n+1}+\dots+t_1)}. \notag
\end{align}
Therefore, 
\begin{align}
L_{n+1,k}&=-\frac d{dh}\, M(\underbrace{1,\dots,1}_{k-1},1+h,\underbrace{1,\dots,1}_{n+1-k})\bigg|_{h=0} \notag \\ 
&=-\frac d{dh}\, \frac1{(n+1-k)!(n+2-k+h)\cdots(n+1+h)}\bigg|_{h=0} \notag \\ 
&=\frac1{(n+1)!}\Big(\frac1{n+2-k}+\dots+\frac1{n+1}\Big) 
=\frac{H_{n+1}-H_{n+1-k}}{(n+1)!}. \notag  
\end{align}
Now Corollary~\ref{cor:} follows by \eqref{eq:key}. 
\end{proof}

%{\color{red}!!!!!! stopped here \today\ \currenttime }

%%%%\bibliographystyle{splncsnat}
%%%%\bibliographystyle{rss}
%%%%\bibliographystyle{abbrv}
%%%%\bibliographystyle{imsart-number}
\bibliographystyle{amsplain}
%%%%\bibliography{citations.nodoi}
%%%
\bibliography{P:/pCloudSync/mtu_pCloud_02-02-17/bib_files/citations10.13.18a}

\def\cprime{$'$} \def\polhk#1{\setbox0=\hbox{#1}{\ooalign{\hidewidth
  \lower1.5ex\hbox{`}\hidewidth\crcr\unhbox0}}}
  \def\polhk#1{\setbox0=\hbox{#1}{\ooalign{\hidewidth
  \lower1.5ex\hbox{`}\hidewidth\crcr\unhbox0}}}
  \def\polhk#1{\setbox0=\hbox{#1}{\ooalign{\hidewidth
  \lower1.5ex\hbox{`}\hidewidth\crcr\unhbox0}}} \def\cprime{$'$}
  \def\polhk#1{\setbox0=\hbox{#1}{\ooalign{\hidewidth
  \lower1.5ex\hbox{`}\hidewidth\crcr\unhbox0}}} \def\cprime{$'$}
  \def\polhk#1{\setbox0=\hbox{#1}{\ooalign{\hidewidth
  \lower1.5ex\hbox{`}\hidewidth\crcr\unhbox0}}} \def\cprime{$'$}
  \def\cprime{$'$}
\providecommand{\bysame}{\leavevmode\hbox to3em{\hrulefill}\thinspace}
\providecommand{\MR}{\relax\ifhmode\unskip\space\fi MR }
% \MRhref is called by the amsart/book/proc definition of \MR.
\providecommand{\MRhref}[2]{%
  \href{http://www.ams.org/mathscinet-getitem?mr=#1}{#2}
}
\providecommand{\href}[2]{#2}
\begin{thebibliography}{1}

\bibitem{arnold-etal92}
Barry~C. Arnold, N.~Balakrishnan, and H.~N. Nagaraja, \emph{A first course in
  order statistics}, Wiley Series in Probability and Mathematical Statistics:
  Probability and Mathematical Statistics, John Wiley \& Sons, Inc., New York,
  1992, A Wiley-Interscience Publication. \MR{1178934}

\bibitem{barrow-smith}
D.~L. Barrow and P.~W. Smith, \emph{{S}pline {N}otation {A}pplied to a {V}olume
  {P}roblem}, Amer. Math. Monthly \textbf{86} (1979), no.~1, 50--51.
  \MR{1538918}

\bibitem{david-nagaraja}
H.~A. David and H.~N. Nagaraja, \emph{Order statistics}, third ed., Wiley
  Series in Probability and Statistics, Wiley-Interscience [John Wiley \&
  Sons], Hoboken, NJ, 2003. \MR{1994955}

\bibitem{fisher29}
R.~A. Fisher, \emph{Tests of significance in harmonic analysis}, Proceedings of
  the Royal Society of London. Series A, Containing Papers of a Mathematical
  and Physical Character \textbf{125} (1929), no.~796, 54--59.

\bibitem{fowler}
David Fowler, \emph{The binomial coefficient function}, The American
  Mathematical Monthly \textbf{103} (1996), no.~1, 1--17.

\bibitem{lawrence}
Jim Lawrence, \emph{Polytope volume computation}, Math. Comp. \textbf{57}
  (1991), no.~195, 259--271. \MR{1079024}

\bibitem{moran47}
P.~A.~P. Moran, \emph{The random division of an interval}, Supplement to the
  Journal of the Royal Statistical Society \textbf{9} (1947), no.~1, 92--98.

\bibitem{pyke65}
R.~Pyke, \emph{Spacings}, Journal of the Royal Statistical Society. Series B
  (Methodological) \textbf{27} (1965), no.~3, 395--449.

\end{thebibliography}

\end{document}